\let\OLDthebibliography\thebibliography
\renewcommand\thebibliography[1]{
  \OLDthebibliography{#1}
  \setlength{\parskip}{0pt}
  \setlength{\itemsep}{0pt plus 0.3ex}
}
\newtheorem{theorem}{{\scshape Theorem}}[section]
\newtheorem{lemma}[theorem]{{\scshape Lemma	}}
\newtheorem{corollary}[theorem]{{\scshape Corollary}}
\begin{document}
\title{Finite skew braces with solvable additive group}
\author{Ilya Gorshkov, Timur Nasybullov}
\date{}
\maketitle
\begin{abstract}
A.~Smoktunowicz and L.~Vendramin conjectured that if $A$ is a finite skew brace with solvable additive group, then the multiplicative group of $A$ is solvable. In this short note we make a step towards positive solution of this conjecture proving that if $A$ is a minimal finite skew brace with solvable additive group and non-solvable multiplicative group, then the multiplicative group of $A$ is not simple. On the way to obtaining this result, we prove that the conjecture of A.~Smoktunowicz and L.~Vendramin is correct in the case when the order of $A$ is not divisible by $3$.\\

\noindent\emph{Keywords: skew brace, solvable group, simple group.} \\
~\\
\noindent\emph{Mathematics Subject Classification: 	20F16, 20D05, 20N99, 16T25.} 
\end{abstract}
\section{Introduction}A skew brace $A=(A,\oplus,\odot)$ is an algebraic system with two binary algebraic operations  $\oplus$, $\odot$ such that $A_{\oplus}=(A,\oplus)$, $A_{\odot}=(A,\odot)$ are groups and the equality
\begin{equation}\label{mainleft}
a\odot(b\oplus c)=(a\odot b)\ominus a \oplus (a\odot c)
\end{equation}
holds for all $a,b,c\in A$, where $\ominus a$ denotes the inverse to $a$ element with respect to the operation~$\oplus$ (we denote by $a^{-1}$ the inverse to $a$ element with respect to $\odot$). The group $A_{\oplus}$ is called the additive group of a skew brace $A$, and the group $A_{\odot}$ is called the multiplicative group of a skew brace $A$. If $A_{\oplus}$ is abelian, then $A$ is called a classical brace.

Classical braces were introduced by Rump in \cite{Rum} in order to study non-degenerate involutive
set-theoretic solutions of the Yang-Baxter equation. Skew braces were introduced by Guarnieri and Vendramin in \cite{GuaVen} in order to study non-degenerate   
set-theoretic solutions of the Yang-Baxter equation which are not necessarily involutive. Skew braces have connections with other algebraic structures such as groups with exact factorizations, Zappa-Sz\'{e}p products,
triply factorized groups and Hopf-Galois extensions  \cite{SmoVen}. Some algebraic aspects of skew braces are studied in \cite{Chi, CedSmoVen, Jes, Nas, SmoVen, KonSmoVen}. Thanks to its relations with the Yang-Baxter equation skew braces can be applied for constructing representations of virtual braid groups and invariants of virtual links\cite{BarNasmult1, BarNasmult2}. A big list of problems concerning skew braces is collected in \cite{Ven}.

Equality (\ref{mainleft}) makes the additive group $A_{\oplus}$ and the multiplicative group $A_{\odot}$ of a skew brace $A$ strongly connected with each other. For example, the following problems formulated in the Kourovka notebook \cite[Problem 19.90]{kt} tell about some of such connections.
\begin{enumerate}
\item  Does there exist a (finite) skew brace with nilpotent multiplicative group but non-solvable additive group?
\item  Does there exist a (finite) skew brace with solvable additive group but non-solvable multiplicative group?
\end{enumerate}
The first problem from the list above was studied, for example, in \cite{Byo, SmoVen, pr2doo, Nas}. The most general result about this question is obtained in \cite{pr2doo}, where it is proved that the answer to this question is negative in the case of finite skew braces. The second problem from the list above was studied, for example, in \cite{SmoVen, Nas}. It is known \cite[Example 3.2]{Nas} that there exists an infinite skew brace which gives a positive answer to this question. So, in general, the answer to the second question from the list above is positive. In this short note we make a step towards negative answer to the second question from the list above for finite skew braces. More precisely, we prove the following theorem. 

~\\
\textbf{Theorem \ref{maintheorem}.} Let $A$ be a minimal finite skew brace, such that $A_{\oplus}$ is solvable, and $A_{\odot}$ is not solvable. Then $A_{\odot}$ is not simple.

~\\
On the way to obtaining this result, we prove the following statement which proves that the answer to the second question from the list above is negative for some finite skew braces.

~\\
\textbf{Corollary \ref{corma}.} Let $A$ be a finite skew brace such that the additive group $A_{\oplus}$ is solvable. If $|A|$ is not divisible by $3$, then $A_{\odot}$ is solvable.

~\\
\textbf{Acknowledgment.} The work is supported by Mathematical Center in Akademgorodok under agreement No. 075-15-2019-1613 with the Ministry of Science and Higher Education of the Russian Federation.

\section{Preliminaries}
In this section we formulate and prove the necessary preliminary results.

Let $A$ be a skew brace.  Equality (\ref{mainleft}) implies that the unit element of $A_{\oplus}$ coincides with the unit element of $A_{\odot}$, we denote this element by $1$. 
For $a\in A$ the map $\lambda_a: x\mapsto \ominus a\oplus (a\odot x)$ is an automorphism of the group $A_{\oplus}$. Moreover, the map $a\mapsto \lambda_a$ gives a homomorphism $A_{\odot}\to {\rm Aut}(A_{\oplus})$. From the definition of $\lambda_a$ follows that for all $a,b\in A$ the equality
\begin{equation}\label{multsum}
a\odot b=a\oplus \lambda_a(b)
\end{equation}
holds. From this equality follows, in particular, that for each $a\in A$ the element $a^{-1}$ has the following form
\begin{equation}\label{9onverse0}
a^{-1}=\ominus\lambda_a^{-1}(a).
\end{equation}
\begin{lemma}\label{characteristicsub}Let $G$ be a  finite group, and $p$ be a prime divisor of the index $|G:[G,G]|$. Then there exists a characteristic subgroup $H$ of $G$ such that $G/H=\mathbb{Z}_p^n$ for some $n>0$.
\end{lemma}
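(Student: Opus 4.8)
The plan is to realize the desired quotient through the abelianization of $G$. First I would pass to $\bar{G}=G/[G,G]$. The commutator subgroup $[G,G]$ is characteristic (in fact fully invariant) in $G$, and since $p$ divides $|G:[G,G]|=|\bar{G}|$, the finite abelian group $\bar{G}$ has a nontrivial Sylow $p$-subgroup. All the work is now internal to $\bar{G}$.

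Next, inside the abelian group $\bar{G}$ I would consider the subgroup $P=\{x^p : x\in\bar{G}\}$ of $p$-th powers, which is a subgroup precisely because $\bar{G}$ is abelian. Every element of $\bar{G}/P$ has order dividing $p$, so $\bar{G}/P$ has exponent dividing $p$ and is therefore elementary abelian, that is, $\bar{G}/P\cong\mathbb{Z}_p^n$ for some $n\geq 0$. To see that $n>0$ I would decompose $\bar{G}$ into its primary components: on each component of order coprime to $p$ the $p$-th power map is an automorphism, so $P$ absorbs such components entirely, whereas on the (nontrivial) $p$-component $P$ is a proper subgroup. Hence $\bar{G}/P\cong\mathbb{Z}_p^n$ with $n\geq 1$. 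This is exactly the step that uses the hypothesis $p\mid |G:[G,G]|$, which guarantees the $p$-component of $\bar{G}$ is nontrivial.

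I would then set $H=\pi^{-1}(P)$, where $\pi\colon G\to\bar{G}$ is the canonical projection. By the correspondence theorem $G/H\cong\bar{G}/P\cong\mathbb{Z}_p^n$, so it only remains to verify that $H$ is characteristic in $G$. This follows from the general principle that the $\pi$-preimage of a characteristic subgroup of $\bar{G}$ is characteristic in $G$: since $[G,G]$ is characteristic, every $\varphi\in{\rm Aut}(G)$ maps $\ker\pi=[G,G]$ to itself and hence induces an automorphism $\bar{\varphi}\in{\rm Aut}(\bar{G})$; and $P$, being the image of the $p$-power map, is invariant under every automorphism of $\bar{G}$. Therefore $\pi(\varphi(H))=\bar{\varphi}(P)=P$, and because $H\supseteq[G,G]=\ker\pi$ this yields $\varphi(H)=\pi^{-1}(P)=H$.

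I do not expect any step to pose a genuine obstacle; the construction is essentially forced once one passes to the abelianization. The only point demanding a moment's care is the verification that $n>0$, i.e. that $P$ is a proper subgroup of $\bar{G}$, which is where the primary decomposition and the divisibility hypothesis are brought to bear. Everything else is a routine application of the correspondence theorem together with the observation that both $[G,G]$ and the $p$-power image $P$ are preserved by all automorphisms.
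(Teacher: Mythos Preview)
Your proof is correct and follows essentially the same approach as the paper: both pass to the abelianization, take $p$-th powers to obtain a characteristic subgroup with elementary abelian quotient, and pull back to $G$. Your version is in fact a bit more streamlined---the paper first singles out an intermediate characteristic subgroup $A$ (the preimage of the $p'$-part of $G/[G,G]$) before taking $p$-th powers, whereas you take $p$-th powers directly in $G/[G,G]$; the resulting $H$ is the same in both arguments.
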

\begin{proof}Let $|G:[G,G]|=p^km$, where $k>0$ and $(p,m)=1$. Denote by $A$ the subset 
$$A=\{x\in G~|~x^m\in [G,G]\}$$
of $G$. It is clear that $A$ is a subgroup of $G$ such that $[G,G]$  is a subgroup of $A$, and $|G:A|=p^k$. Let $\varphi$ be an automorphism of $G$, and $x$ be an element from $A$.  Then $x^m\in[G,G]$, and therefore $\varphi(x)^m\in [G,G]$, i.~e. $\varphi(x)$ is an element from $A$, so, $A$ is a characteristic subgroup of $G$. Since $[G,G]\leq A$, the quotient $G/A$ is abelian $p$-group.

Denote by  $B=\{x^p~|~x\in G/A\}$ the subgroup of $G/A$. Since  $G/A$ is abelian $p$-group, it is clear that $B\neq G/A$. Let $H$ be the preimage of $B$ in $G$
$$H=\{x^pa~|~x\in G, h\in A\}.$$
It is clear that $H$ is a subgroup of $G$ such that $[G,G]\leq A\leq H$. Let $\varphi$ be an automorphism of $G$, and $x$ be an element from $H$.  Then $x=y^pa$ for some $y\in G$, $a\in A$, and therefore $\varphi(x)^m=\varphi(y)^p\varphi(a)$. Since $A$ is characteristic in $G$, the element $\varphi(x)$ belongs to $H$, so, $H$ is a characteristic subgroup of $G$. Since $[G,G]\leq A\leq H$, the quotient $G/H$ is a finite abelian group. Since for every $x\in G$ the element $x^p$ belongs to $H$, every element from $G/H$ is of order $p$, therefore, $G/H=\mathbb{Z}_p^n$. Since $B=H/A\neq G/A$, the index $|G:H|$ is equal to
$$|G:H|=|G:A|/|H:A|\neq 1,$$
therefore $n>0$.
\end{proof}
\begin{corollary}\label{corma}Let $A$ be a finite skew brace such that the additive group $A_{\oplus}$ is solvable. If $|A|$ is not divisible by $3$, then $A_{\odot}$ is solvable. 
\end{corollary}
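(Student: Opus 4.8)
My plan is to argue by induction on $|A|$ (equivalently, to analyse a counterexample of minimal order) and to transfer the question to the automorphism group of $A_\oplus$ via the holomorph. First I would record that (\ref{multsum}) makes $a\mapsto(a,\lambda_a)$ an injective homomorphism of $A_\odot$ into $A_\oplus\rtimes\mathrm{Aut}(A_\oplus)$: indeed $(a,\lambda_a)(b,\lambda_b)=(a\oplus\lambda_a(b),\lambda_a\lambda_b)=(a\odot b,\lambda_{a\odot b})$. In particular $\lambda\colon A_\odot\to\mathrm{Aut}(A_\oplus)$ is a homomorphism, and its kernel $\{a:\lambda_a=\mathrm{id}\}$ satisfies $a\odot b=a\oplus b$ and, by (\ref{9onverse0}), $a^{-1}=\ominus a$, so it is simultaneously a subgroup of $A_\odot$ and of $A_\oplus$; being contained in the solvable group $A_\oplus$ it is solvable. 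Hence $A_\odot$ is solvable if and only if $\lambda(A_\odot)\cong A_\odot/\ker\lambda$ is solvable, and $|\lambda(A_\odot)|$ divides $|A|$, so it is coprime to $3$.

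Now suppose $A_\odot$ is not solvable. Then $\lambda(A_\odot)$ is a non-solvable group of order prime to $3$ and therefore has a non-abelian simple composition factor $S$ with $3\nmid|S|$. By the classification of finite simple groups the only such groups are the Suzuki groups, so $S\cong Sz(2^{2m+1})$ for some $m\geq1$, and $|Sz(2^{2m+1})|$ divides $|A|$. This is the single obstruction I must eliminate.

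To set up the induction I would first dispose of the easy structural cases. If $A_\oplus$ is a $p$-group then $|A_\odot|=|A|$ is a prime power, so $A_\odot$ is nilpotent, a contradiction; thus $|A|$ has at least two prime divisors. Next, if $A$ possesses a proper non-trivial ideal $I$, then $I$ and $A/I$ are skew braces of smaller order whose additive groups are solvable (a subgroup, respectively a quotient, of $A_\oplus$) and whose orders divide $|A|$; by the inductive hypothesis $I_\odot$ and $(A/I)_\odot$ are solvable, and since $I_\odot\trianglelefteq A_\odot$ with $A_\odot/I_\odot\cong(A/I)_\odot$, the group $A_\odot$ is solvable, again a contradiction. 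Hence a minimal counterexample must be a \emph{simple} skew brace.

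It remains to rule out a simple counterexample, and this is where I expect the real difficulty to lie. The natural idea is to apply Lemma \ref{characteristicsub}: choosing a prime $p\neq3$ dividing $|A_\oplus:[A_\oplus,A_\oplus]|$ yields a proper characteristic subgroup $H\trianglelefteq A_\oplus$ with $A_\oplus/H\cong\mathbb{Z}_p^{\,n}$, and if $H$ were an ideal then simplicity would force $H=1$, making $A_\oplus$ elementary abelian and $A_\odot$ a $p$-group, the contradiction already observed. The obstacle is that a characteristic subgroup of $A_\oplus$ need not be a two-sided ideal: writing out $a\odot h\odot a^{-1}=a\oplus\lambda_a(h)\ominus\lambda_a\lambda_h\lambda_a^{-1}(a)$ shows that the right-hand side lands in $H$ only when $\lambda_h$ acts trivially on $A_\oplus/H$, which is not automatic. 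Thus the crux of the proof is to combine the \emph{regularity} of the embedding $A_\odot\hookrightarrow A_\oplus\rtimes\mathrm{Aut}(A_\oplus)$, which forces $|A_\odot|=|A_\oplus|$ and hence $|Sz(2^{2m+1})|\mid|A_\oplus|$, with the action of $\lambda(A_\odot)$ on the characteristic sections supplied by the Lemma, so as to contradict either the simplicity of $A$ or the order of the Suzuki factor. Controlling this Suzuki section inside $\mathrm{Aut}(A_\oplus)$ is the step I expect to carry the entire weight of the argument; everything preceding it is formal.
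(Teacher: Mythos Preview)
Your proposal is incomplete: you explicitly leave the elimination of the Suzuki composition factor as an unfinished ``crux'', so as written this is not a proof. More importantly, the obstacle you are wrestling with---that a characteristic subgroup $H\trianglelefteq A_\oplus$ need not be an \emph{ideal}---is not the right obstacle, and the paper's argument sidesteps it entirely.

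The point you are missing is that the characteristic subgroup $H$ from Lemma~\ref{characteristicsub} is automatically a \emph{sub-skew-brace}, even if it is not an ideal: for $a,b\in H$ one has $\lambda_a(b)\in H$ (since $H$ is characteristic in $A_\oplus$) and hence $a\odot b=a\oplus\lambda_a(b)\in H$. Thus $H_\odot\leq A_\odot$ is a subgroup of index $p^n$. By minimality of the counterexample (no reduction to simple skew braces is needed), $H_\odot$ is solvable. Now take a Sylow $p$-subgroup $P$ of $A_\odot$; since $\gcd([A_\odot:H_\odot],[A_\odot:P])=1$, one has $A_\odot=H_\odot\cdot P$, a product (not necessarily normal) of two solvable subgroups. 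The decisive input is then Syskin's theorem \cite[Lemma~2]{Sys}: a finite group of order coprime to $3$ that factors as a product of two solvable subgroups is itself solvable. This immediately gives the contradiction.

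So the paper's route is: characteristic subgroup $\Rightarrow$ sub-skew-brace $\Rightarrow$ solvable factorisation $\Rightarrow$ Syskin. Your route tries to promote $H$ to an ideal in order to use simplicity of $A$, which is both harder and unnecessary; and your appeal to the classification to isolate Suzuki groups, while correct, still leaves you with no mechanism to finish. The entire difficulty you anticipated dissolves once you realise that a sub-skew-brace (rather than an ideal) suffices, and that the $3'$-hypothesis is used not to constrain composition factors directly but to invoke a factorisation theorem.
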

\begin{proof}Suppose that the statement is not correct and let $(A,\oplus,\odot)$ be a counter example of minimal order. Since $A_{\oplus}$ is solvable, the index $|A_{\oplus}:[A_{\oplus},A_{\oplus}]|$ is divisible by some prime number $p$. From Lemma~\ref{characteristicsub} follows that in $A_{\oplus}$ there exists a characteristic subgroup $H$ of index $p^{n}$. Consider arbitrary elements $a,b\in H$. Since $\lambda_a$ is an automorphism of $A_{\oplus}$, and $H$ is a characteristic subgroup of $A_{\oplus}$, the element $\lambda_a(b)$ belongs to $H$, and the element $a\odot b=a\oplus\lambda_a(b)$ belongs to $H$. Therefore the set $H$ is a subgroup of $A_{\odot}$, and hence $H$ is a skew subbrace of $A$. Since $A$ is a minimal finite skew brace, such that $A_{\oplus}$ is solvable, and $A_{\odot}$ is not solvable, and $H$ is a proper skew subbrace of $A$, the group $H_{\odot}$ is solvable. Let $P$ be a Sylow $p$-subgroup of $A_{\odot}$. This group is a $p$-group, therefore, $P$ is solvable. Since $|A_{\odot}:H_{\odot}|=p^{n}$ and $|A_{\odot}:P|$ are relatively prime, we have $A_{\odot}=H_{\odot}\odot P$. Therefore $A_{\odot}$ is a product of two solvable subgroups. Since $A_{\odot}$ is not divisible by $3$, from \cite[Lemma~2]{Sys} follows that $A_{\odot}$ is solvable, what contradicts the choice of $A$.
\end{proof}

Let $G$ be a finite group, and $p$ be a prime number. Recall that a subgroup $H$ of $G$ is said to be a $p$-complement if the index $|G:H|$ is a power of $p$, and the order $|H|$ is relatively prime with $p$. 
The following statement proved in \cite[Theorem 7]{Kazarin} states when a finite simple group has a $p$-complement. 
\begin{lemma}\label{composition factors}Let $G$ be a finite simple group, and $p$ be a prime divisor of $|G|$. If $G$ has a $p$-complement, then one of the following statements hold 
\begin{enumerate}
\item $G=A_p$;
\item $G={\rm PSL}_n(q)$, where $\frac{q^n-1}{q-1}$ is a power of $p$;
\item $G={\rm PSL}_2(11)$, $p=11$;
\item $G=M_{p}$, $p=11$ or  $p=23$.
\end{enumerate} 
\end{lemma}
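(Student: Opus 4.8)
The plan is to observe that a $p$-complement $H$ of $G$ is precisely a Hall $p'$-subgroup: since $|H|$ is coprime to $p$ and $|G:H|$ is a power of $p$, the index must absorb all of the $p$-part of $|G|$, so $|G:H|=|G|_p$. In particular $H$ is a subgroup of $p$-power index in a finite simple group, and the natural starting point is the classification (resting on the classification of finite simple groups) of such subgroups due to Guralnick. That result asserts that if a nonabelian simple group $G$ has a subgroup of index $p^a>1$, then $(G,H)$ is one of: $A_{p^a}$ with $H=A_{p^a-1}$; ${\rm PSL}_n(q)$ with $H$ the stabilizer of a projective point or hyperplane and $p^a=(q^n-1)/(q-1)$; ${\rm PSL}_2(11)$ with $|G:H|=11$; $M_{11}$ with index $11$ or $M_{23}$ with index $23$; or ${\rm PSU}_4(2)\cong{\rm PSp}_4(3)$ with index $27$. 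Granting this list, the remaining task is to impose the extra condition $p\nmid|H|$, equivalently $|G:H|=|G|_p$, and to decide which cases survive.

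Next I would dispose of the combinatorially easy cases. For $G=A_n$ with $n=p^a$ and $H=A_{n-1}$, Legendre's formula gives $v_p(|A_n|)=(p^a-1)/(p-1)$, which strictly exceeds $a$ as soon as $a\geq 2$; hence $|G:H|=p^a$ equals $|G|_p$ only when $a=1$, leaving exactly $G=A_p$ (item 1). For the sporadic and exceptional entries one checks orders directly: in $M_{11}$, in $M_{23}$, and in ${\rm PSL}_2(11)$ the indices $11$, $23$, $11$ already exhaust the relevant prime part, so $H$ is a genuine $p$-complement (items 3 and 4). By contrast ${\rm PSU}_4(2)\cong{\rm PSp}_4(3)$ has order $2^6\,3^4\,5$, so $|G|_3=81\neq 27$; the index-$27$ subgroup has order divisible by $3$ and is \emph{not} a $3$-complement, so this Guralnick entry is discarded, which explains its absence from the conclusion.

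The linear family is where the real work lies, and I expect it to be the main obstacle: I must prove that whenever $(q^n-1)/(q-1)$ is a power of $p$, the point-stabilizer is \emph{automatically} a $p$-complement, i.e.\ that $(q^n-1)/(q-1)$ already equals $|{\rm PSL}_n(q)|_p$. Writing $e$ for the multiplicative order of $q$ modulo $p$, the hypothesis $p\mid(q^n-1)/(q-1)$ forces $e\mid n$, and since $|{\rm PSL}_n(q)|_p$ is controlled by $\prod_{i=2}^{n}(q^i-1)$, it suffices to show $v_p(q^i-1)=0$ for $2\leq i<n$ together with $v_p(q-1)=0$; this is equivalent to $e=n$. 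For odd $p$ this follows from Zsygmondy's theorem: if $e<n$, a primitive prime divisor $r$ of $q^n-1$ would divide $(q^n-1)/(q-1)$ and satisfy $r\neq p$ (as $p$ divides $q^e-1$ with $e<n$), contradicting that the quotient is a $p$-power, while the Zsygmondy exceptions $(q,n)=(2,6)$ and $n=2$ never yield an odd prime-power index. For $p=2$ a short $2$-adic computation shows that $(q^n-1)/(q-1)$ can be a power of $2$ only when $n=2$, using that $q^2+1\equiv 2\pmod 4$ for odd $q$ forces an odd prime factor once $n\geq 4$; and there $|G:H|=q+1$ again equals $|G|_2$. Collecting the surviving cases yields precisely items 1–4, completing the argument.
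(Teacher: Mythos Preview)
The paper does not give a proof of this lemma at all: it is quoted verbatim as \cite[Theorem~7]{Kazarin}. Your proposal, by contrast, supplies an actual argument by invoking Guralnick's classification of subgroups of prime-power index in nonabelian finite simple groups and then filtering that list by the $p$-complement condition. This is a standard and correct alternative derivation, and your handling of $A_{p^a}$ (forcing $a=1$ via Legendre) and of ${\rm PSU}_4(2)\cong{\rm PSp}_4(3)$ (where the index-$27$ subgroup still has order divisible by $3$) is exactly what is needed to pare Guralnick's list down to the four items in the statement.

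One remark: the portion you flag as ``where the real work lies''---showing that whenever $(q^n-1)/(q-1)$ is a $p$-power it already equals $|{\rm PSL}_n(q)|_p$---is unnecessary for the lemma as stated. The lemma is a one-way implication. If $G$ has a $p$-complement $H$, then $|G:H|$ is a $p$-power, Guralnick places $H$ as a point or hyperplane stabilizer with $|G:H|=(q^n-1)/(q-1)$, and item~2 is satisfied immediately. Verifying that the point stabilizer is \emph{itself} a $p$-complement amounts to proving the converse (that every group in item~2 really possesses a $p$-complement), which the lemma does not claim. So your Zsygmondy\,/\,$2$-adic analysis, while interesting, can be dropped without harm; only the eliminations of $A_{p^a}$ with $a\geq 2$ and of ${\rm PSU}_4(2)$ are essential, since those Guralnick entries are absent from the conclusion and must be shown not to arise under the $p$-complement hypothesis.
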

Lemma~\ref{composition factors} has the following corollary. 
\begin{lemma}\label{solvablepcomp}Let $G$ be a finite group, and $p$ be a prime divisor of $|G|$. If $G$ has a solvable $p$-complement, then each non-abelian composition factor of $G$ is one of
\begin{enumerate}
\item ${\rm PSL}_2(q)$, where $q+1$ is a power of $p$;
\item ${\rm PSL}_2(7)$, where $p=7$;
\item ${\rm PSL}_3(3)$, where $p=13$.
\end{enumerate}
\end{lemma}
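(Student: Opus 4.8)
The plan is to reduce the statement to finite simple groups and then combine Lemma~\ref{composition factors} with a case-by-case check of which groups on Kazarin's list actually admit a \emph{solvable} $p$-complement. First I would record how $p$-complements pass to normal subgroups and quotients: if $G$ has a $p$-complement $H$ and $N\trianglelefteq G$, then $N\cap H$ is a $p$-complement of $N$ and $HN/N$ is a $p$-complement of $G/N$, since in each case the relevant index divides $|G:H|$ and the relevant order divides $|H|$, and solvability of $H$ passes to $N\cap H$ and to $HN/N$. Running the first observation downwards along a composition series $1=G_0\triangleleft G_1\triangleleft\dots\triangleleft G_m=G$ gives each $G_i$ a solvable $p$-complement, and the second observation then equips every composition factor $G_i/G_{i-1}$ with one. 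Hence it suffices to show that a finite non-abelian simple group $S$ with a solvable $p$-complement is one of the three listed groups; note that $p\mid|S|$, since otherwise $S$ would be its own $p$-complement and therefore solvable.

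Since $S$ then has a $p$-complement, Lemma~\ref{composition factors} applies and $S$ is $A_p$, or ${\rm PSL}_n(q)$ with $(q^n-1)/(q-1)$ a power of $p$, or ${\rm PSL}_2(11)$ with $p=11$, or $M_p$ with $p\in\{11,23\}$; I would discard every entry whose $p$-complement cannot be solvable. For $A_p$ the order $|A_p|$ is divisible by $p$ but not by $p^2$, so a $p$-complement has index $p$ and is maximal; the subgroups of index $p$ in $A_p$ are the point stabilizers $A_{p-1}$, which are solvable only when $p\le 5$, leaving $A_5\cong{\rm PSL}_2(4)$ (case~1). The three sporadic entries are eliminated by direct inspection, since the subgroups of index $11$ in ${\rm PSL}_2(11)$ and in $M_{11}$, and of index $23$ in $M_{23}$, are respectively $A_5$, $M_{10}$ and $M_{22}$, none of which is solvable.

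This leaves $S={\rm PSL}_n(q)$ with $(q^n-1)/(q-1)=p^k$. One first checks $p\nmid q$ (otherwise $(q^n-1)/(q-1)\equiv 1\pmod p$), so a $p$-complement has index exactly $p^k=(q^n-1)/(q-1)$, the minimal index of a proper subgroup of ${\rm PSL}_n(q)$. For $n=2$ the point stabilizer is a Borel subgroup, of index $q+1=p^k$, order prime to $p$, and solvable, so all such ${\rm PSL}_2(q)$ survive and give case~1. For $n\ge 3$ the subgroups of that minimal index are the parabolic point- and hyperplane-stabilizers, whose Levi part contributes a composition factor ${\rm PSL}_{n-1}(q)$; solvability of the $p$-complement forces ${\rm PSL}_{n-1}(q)$ to be solvable, i.e.\ $n=3$ and $q\in\{2,3\}$, producing ${\rm PSL}_3(2)\cong{\rm PSL}_2(7)$ with $p=7$ (case~2) and ${\rm PSL}_3(3)$ with $p=13$ (case~3).

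The step I expect to be the main obstacle is the claim used for $n\ge 3$ that \emph{every} subgroup of index $(q^n-1)/(q-1)$ is a point- or hyperplane-stabilizer, so that its composition factor ${\rm PSL}_{n-1}(q)$ genuinely blocks solvability. This relies on the classification of minimal-degree permutation representations of ${\rm PSL}_n(q)$; the only exceptional case, $(n,q)=(4,2)$, is harmless because $(2^4-1)/(2-1)=15$ is not a prime power and so never occurs in Kazarin's list.
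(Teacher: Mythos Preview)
Your argument is correct and closely parallels the paper's: both reduce to non-abelian simple composition factors (you spell out the passage of solvable $p$-complements to subquotients more carefully than the paper, which simply asserts it), apply Lemma~\ref{composition factors}, and dispose of $A_p$, ${\rm PSL}_2(11)$, $M_{11}$, $M_{23}$ by the same Atlas-based inspection of maximal subgroups of $p$-power index.

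The one substantive difference is in the ${\rm PSL}_n(q)$ case with $(q^n-1)/(q-1)=p^k$. You pin down the $p$-complement as a subgroup of index exactly $(q^n-1)/(q-1)$ and then invoke the classification of minimal-index subgroups to identify it as a maximal parabolic carrying a ${\rm PSL}_{n-1}(q)$ section. The paper takes a different route: it observes that the natural copy $B\cong{\rm PSL}_{n-1}(q)$ inside ${\rm PSL}_n(q)$ has order coprime to $p$ and therefore lies in \emph{some} $p$-complement $C$, and then cites \cite{ButRev} (any two $p$-complements of ${\rm PSL}_n(q)$ are conjugate under ${\rm Aut}({\rm PSL}_n(q))$) to transfer solvability from the given $p$-complement to $C$, forcing ${\rm PSL}_{n-1}(q)$ to be solvable. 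The paper's conjugacy argument sidesteps the minimal-degree classification that you flag as the main obstacle. Note, however, that both routes quietly rely on the arithmetic fact that $p\nmid q^i-1$ for $1\le i<n$: you need it so that the $p$-part of $|{\rm PSL}_n(q)|$ is exactly $(q^n-1)/(q-1)$ (your deduction of this from $p\nmid q$ alone is too quick), while the paper needs it for $|{\rm PSL}_{n-1}(q)|$ to be coprime to $p$; neither proves it explicitly.
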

\begin{proof} Since $G$ has a solvable $p$-complement, each composition factor of $G$ also has a solvable $p$-complement. Since composition factors of $G$ are simple, from Lemma~\ref{composition factors} follows that each non-abelian composition factor is one of
\begin{enumerate}
\item $A_p$,
\item ${\rm PSL}_n(q)$, where $\frac{q^n-1}{q-1}$ is a power of $p$,
\item ${\rm PSL}_2(11)$, $p=11$,
\item $M_{p}$, $p=11$ or  $p=23$.
\end{enumerate} 
Let us check which cases from the list above are possible. It is clear that the $p$-complement in the alternating group $A_p$ is isomorphic to $A_{p-1}$. From the condition that the $p$-complement is solvable follows that $A_{p-1}$ is either $A_2$, or $A_3$, or $A_4$, so, $p\in \{3,4,5\}$. The group $A_3$ is abelian (but the statement of the lemma is about non-abelian composition factors), the group $A_4$ is not simple (but composition factors are simple), therefore the only possible case is that the composition factor is isomorphic to $A_5$, and $p=5$. Since  $A_5$ is isomorphic to ${\rm PSL}_2(4)$, and $(4^2-1)/(4-1)=5$, the composition factor is isomorphic to ${\rm PSL}_2(q)$, where $\frac{q^2-1}{q-1}=q+1$ is a power of $p$.

Suppose that the group ${\rm PSL}_2(11)$ has a solvable $11$-complement $A$. This complement is contained in some maximal subgroup $B$ of ${\rm PSL}_2(11)$. Since $|{\rm PSL}_2(11):A|$ is a power of $11$, and 
$$|{\rm PSL}_2(11):A|=|{\rm PSL}_2(11):B||B:A|,$$ 
the indices $|{\rm PSL}_2(11):B|$, $|B:A|$ are powers of $11$. Since $|{\rm PSL}_2(11):B|$ is a power of $11$, according to the atlas of finite groups \cite[Page 7]{Atlas} the group $B$ is isomorphic to $A_5$, and $|{\rm PSL}_2(11):B|=11$. Since $|B:A|$ is a power of $11$, and $|B|=|{\rm PSL}_2(11)|/11=60$ is not divisible by $11$, we have $A=B$, therefore $A$ is not solvable. This contradiction shows that the case ${\rm PSL}_2(11)$, $p=11$ is impossible.

Suppose that the group $M_{11}$ has a solvable $11$-complement $A$. This complement is contained in some maximal subgroup $B$ of $M_{11}$. Since $|M_{11}:A|$ is a power of $11$, and 
$$|M_{11}:A|=|M_{11}:B||B:A|,$$ 
the indices $|M_{11}:B|$, $|B:A|$ are powers of $11$. Since $|M_{11}:B|$ is a power of $11$, according to the atlas of finite groups \cite[Page 18]{Atlas} the group $B$ is isomorphic to $M_{10}$ which is the extension of $A_6$ by the graph automorphism, and $|M_{11}:B|=11$. Since $|B:A|$ is a power of $11$, and $|B|=|M_{11}|/11=2^33^25$ is not divisible by $11$, we have $A=B$, therefore $A$ is not solvable. This contradiction shows that the case $M_{11}$, $p=11$ is impossible.

Suppose that the group $M_{23}$ has a solvable $23$-complement $A$. This complement is contained in some maximal subgroup $B$ of $M_{23}$. Since $|M_{23}:A|$ is a power of $23$, and 
$$|M_{23}:A|=|M_{23}:B||B:A|,$$ 
the indices $|M_{23}:B|$, $|B:A|$ are powers of $23$. Since $|M_{23}:B|$ is a power of $23$, according to the atlas of finite groups \cite[Page 105]{Atlas} the group $B$ is isomorphic to $M_{22}$, and $|M_{23}:B|=23$. Since $|B:A|$ is a power of $23$, and $|B|=|M_{23}|/23=2^7\cdot3^2\cdot5\cdot7\cdot11$ is not divisible by $23$, we have $A=B$, therefore $A=M_{22}$ is not solvable. This contradiction shows that the case $M_{23}$, $p=23$ is impossible. 

Hence, the only possible composition factors of $G$ are the groups isomorphic to  ${\rm PSL}_n(q)$, where $(q^n-1)/(q-1)$ is a power of $p$. Let $A$ be a solvable $p$-complement in ${\rm PSL}_n(q)$. The group ${\rm PSL}_n(q)$ has a subgroup $B$ isomorphic to ${\rm PSL}_{n-1}(q)$. The order of $B$ is relatively prime with $(q^n-1)/(q-1)$, and since $(q^n-1)/(q-1)$ is a power of $p$, the order of $B$ is relatively prime with $p$. Therefore the subgroup $B$ of ${\rm PSL}_{n}(q)$ is contained in some $p$-complement $C$ of ${\rm PSL}_{n}(q)$. Since $A$ and $C$ are two $p$-complements in ${\rm PSL}_n(q)$, according to \cite[Proposition~1]{ButRev} there exists an automorphism $\varphi$ of ${\rm PSL}_n(q)$ such that $\varphi(A)=C$, in particular, $C$ is a solvable $p$-complement. Since $B$ is a subgroup of $C$, the group $B$ is solvable. Since $B$ is isomorphic to ${\rm PSL}_{n-1}(q)$, and ${\rm PSL}_{n-1}(q)$ is solvable only in the cases when ${\rm PSL}_{n-1}(q)={\rm PSL}_{1}(q)$, ${\rm PSL}_{n-1}(q)={\rm PSL}_{2}(2)$, or ${\rm PSL}_{n-1}(q)={\rm PSL}_{2}(3)$, the non-abelian composition factor of $G$ which is isomorphic to ${\rm PSL}_n(q)$, where $(q^n-1)/(q-1)$ is a power of $p$, is one of the following three
\begin{enumerate}
\item ${\rm PSL}_2(q)$, where $q+1$ is a power of $p$;
\item ${\rm PSL}_3(2)$, where $p=7$;
\item ${\rm PSL}_3(3)$, where $p=13$.
\end{enumerate}
Since ${\rm PSL}_3(2)$ is isomorphic to ${\rm PSL}_2(7)$, the lemma is proved.
\end{proof}
The following statement is contained in \cite[Table 5.3.A]{Kleid}.
\begin{lemma}\label{letsrepresent}Let $p,q$ be powers of different primes, and $q\neq 4,9$. If there exists a faithful representation ${\rm PSL}_2(q)\to {\rm GL}_n(p)$, then  $n\geq (q-1)/(2,q-1)$, where $(2,q-1)$ denotes the greatest commond divisor of $2$ and $q-1$.
\end{lemma}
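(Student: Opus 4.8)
The plan is to recognise this inequality as the Landazuri--Seitz lower bound for the minimal degree of a nontrivial cross-characteristic representation of ${\rm PSL}_2(q)$, and to prove it directly by a Clifford-theoretic argument inside a Borel subgroup. Write $q=r^{f}$ and $p=\ell^{a}$ with $r\neq\ell$ primes (this is precisely the hypothesis that $p$ and $q$ are powers of different primes). A faithful map ${\rm PSL}_2(q)\to{\rm GL}_n(p)$ makes $\mathbb{F}_p^{\,n}$ into a faithful $n$-dimensional module $V$, and extending scalars to $\overline{\mathbb{F}_\ell}$ changes neither $n$ nor faithfulness, so I would work over an algebraically closed field of characteristic $\ell\neq r$.

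First I would restrict $V$ to a Sylow $r$-subgroup $U\leq{\rm PSL}_2(q)$. Here $U$ is elementary abelian of order $q$ and isomorphic to $\mathbb{F}_q^{+}$, and because $\ell\neq r$ it is an $\ell'$-group; hence $V|_U$ is semisimple and decomposes as a sum of one-dimensional characters $\chi_i\colon U\to\overline{\mathbb{F}_\ell}^{\,*}$. As $V$ is faithful and $U\neq1$, the group $U$ acts nontrivially, so some $\chi_i$ is nontrivial.

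Next I would use a maximal split torus $T\leq{\rm PSL}_2(q)$, which is cyclic of order $(q-1)/(2,q-1)$ and normalises $U$. Identifying $U$ with $\mathbb{F}_q$, the torus acts by $x\mapsto t^{2}x$, so on the nontrivial characters ${\rm Irr}(U)\setminus\{1\}$ every $T$-orbit has size exactly $(q-1)/(2,q-1)$: for $q$ odd these orbits correspond to the two cosets of the group of nonzero squares, and for $q$ even squaring is bijective and there is a single orbit of size $q-1$. Since $T$ normalises $U$, conjugation by $T$ permutes the isotypic components of $V|_U$ and carries any component to the one indexed by the conjugate character, so all $(q-1)/(2,q-1)$ characters in the $T$-orbit of a nontrivial $\chi_i$ actually occur in $V|_U$. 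Comparing dimensions yields $n\geq(q-1)/(2,q-1)$.

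The step I expect to require the most care is the orbit computation together with the projective bookkeeping: one must verify that the image of the diagonal torus in ${\rm PSL}_2(q)$ still has order $(q-1)/(2,q-1)$ and that the central element acting trivially on $U$ does not merge distinct orbits. I would also point out that this argument delivers the bound $(q-1)/(2,q-1)$ for every prime power $q$, including $q=4,9$; the reason those two values are excluded in Table~5.3.A of \cite{Kleid} is that ${\rm PSL}_2(4)\cong A_5$ and ${\rm PSL}_2(9)\cong A_6$ exhibit exceptional behaviour making the \emph{exact} minimal degree differ from the generic pattern, so the table records them separately, but the inequality itself is unaffected.
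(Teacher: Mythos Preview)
Your argument is correct, but note that the paper does not actually prove this lemma: it merely records the inequality as a quotation from Table~5.3.A of Kleidman--Liebeck \cite{Kleid} and moves on. What you give is a genuine self-contained proof via the standard Landazuri--Seitz mechanism---restrict to the Borel $B=U\rtimes T$, decompose $V|_U$ into linear characters over $\overline{\mathbb{F}_\ell}$ (legitimate since $\ell\neq r$), and observe that the $T$-orbit of any nontrivial character already has size $(q-1)/(2,q-1)$. The orbit computation is clean once you phrase it as: the image of $T$ in ${\rm Aut}(U)\cong\mathbb{F}_q^{*}$ is exactly the subgroup of nonzero squares, of order $(q-1)/(2,q-1)$, acting by multiplication and hence freely on $\mathbb{F}_q^{*}\cong{\rm Irr}(U)\setminus\{1\}$. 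So the two ``approaches'' differ in that the paper buys brevity by delegating to a reference, while your route makes the bound independent of \cite{Kleid}, explains where the quantity $(q-1)/(2,q-1)$ comes from, and (as you correctly remark) shows that the exclusion of $q=4,9$ is an artefact of how the table is organised rather than a failure of the inequality itself.
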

\section{Proof of the main theorem}
In this section we prove the following main theorem of the paper.
\begin{theorem}\label{maintheorem}Let $A$ be a minimal finite skew brace, such that $A_{\oplus}$ is solvable, and $A_{\odot}$ is not solvable. Then $A_{\odot}$ is not simple.
\end{theorem}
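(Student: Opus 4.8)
The plan is to argue by contradiction: suppose that $A_\odot$ is simple. As $A_\odot$ is not solvable, it is then a non-abelian finite simple group. Consider the homomorphism $\lambda\colon A_\odot\to {\rm Aut}(A_\oplus)$, $a\mapsto\lambda_a$. Its kernel is normal in the simple group $A_\odot$, hence trivial or everything. If it is everything, then $\lambda_a={\rm id}$ for all $a$, so by (\ref{multsum}) we get $a\odot b=a\oplus b$, i.e. $A_\oplus=A_\odot$, which is absurd since $A_\oplus$ is solvable and $A_\odot$ is not. Thus $\lambda$ is injective and $A_\odot$ acts faithfully on the solvable group $A_\oplus$.

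First I would extract a solvable $p$-complement in $A_\odot$. Since $A_\oplus$ is solvable, some prime $p$ divides $|A_\oplus:[A_\oplus,A_\oplus]|$, and Lemma~\ref{characteristicsub} gives a characteristic subgroup $H\leq A_\oplus$ with $A_\oplus/H\cong\mathbb{Z}_p^{\,n}$. Exactly as in the proof of Corollary~\ref{corma}, $H$ is a skew subbrace, so $H_\odot\leq A_\odot$ with $|A_\odot:H_\odot|=p^{n}$, and by minimality of $A$ the group $H_\odot$ is solvable. A Hall $p'$-subgroup $Q$ of $H_\odot$ then has order equal to the $p'$-part of $|A_\odot|$ (because $|A_\odot:H_\odot|$ is a power of $p$), so $Q$ is a solvable $p$-complement of $A_\odot$. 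As $A_\odot$ is simple, Lemma~\ref{solvablepcomp} forces $A_\odot$ to be ${\rm PSL}_2(q)$ with $q+1$ a power of $p$, or ${\rm PSL}_2(7)$, or ${\rm PSL}_3(3)$.

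Next I would produce a faithful representation of $A_\odot$ of controlled dimension. Refine a chief series of $A_\oplus$ to one with elementary abelian factors; since the stabiliser of such a series in ${\rm Aut}(A_\oplus)$ is nilpotent (Hall--Kaloujnine) whereas $A_\odot$ is not, the group $A_\odot$ acts nontrivially, hence by simplicity faithfully, on some factor $V\cong\mathbb{Z}_r^{\,k}$. This yields an embedding $A_\odot\hookrightarrow {\rm GL}_k(r)$ together with $r^{k}\mid |A_\oplus|=|A_\odot|$. For $A_\odot={\rm PSL}_2(q)$ I would split into two subcases. If $r$ is not the defining prime $\ell$ of $q$ and $q\neq 4,9$, then Lemma~\ref{letsrepresent} gives $k\geq (q-1)/(2,q-1)$, while $r^{k}\mid |A_\odot|=q(q^{2}-1)/(2,q-1)$ makes $k$ small; comparing these bounds eliminates every ${\rm PSL}_2(q)$ with $q$ large. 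If instead $r=\ell$, then $r^{k}$ divides the $\ell$-part $q$ of $|A_\odot|$, so $k\leq f$ where $q=\ell^{f}$, which contradicts the (strictly larger) minimal faithful dimension of ${\rm PSL}_2(q)$ in defining characteristic. Running through the finitely many small $q$ left over leaves only ${\rm PSL}_2(7)$; in particular $q=4$, giving $A_5={\rm PSL}_2(4)$, is disposed of directly, as $r^{k}\mid 60$ leaves no room for a faithful module.

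The hard part is the endgame. The two groups ${\rm PSL}_2(7)$ and ${\rm PSL}_3(3)$ survive all the estimates above, because each has a faithful module small enough to embed into a solvable group of order $|A_\odot|$ — namely $\mathbb{Z}_2^{\,3}$ for ${\rm PSL}_2(7)\cong{\rm GL}_3(2)$ and the natural $\mathbb{Z}_3^{\,3}$ for ${\rm PSL}_3(3)\cong{\rm SL}_3(3)$ — so Lemma~\ref{letsrepresent} and the order bound alone do not reach a contradiction. For these I expect to need the finer constraint coming from (\ref{multsum}): that $A_\odot$ embeds as a regular subgroup of the holomorph $A_\oplus\rtimes{\rm Aut}(A_\oplus)$ with $|A_\oplus|=|A_\odot|$ and $A_\oplus$ solvable. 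Determining the admissible solvable groups $A_\oplus$ of orders $168$ and $5616$ and verifying directly, by structural analysis or by computer, that no such regular subgroup (equivalently, no such skew brace) exists is where the real work lies; this final contradiction completes the proof that $A_\odot$ cannot be simple.
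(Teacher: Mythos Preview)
Your strategy coincides with the paper's up through extracting the solvable $p$-complement and applying Lemma~\ref{solvablepcomp}. The divergence, and the gap, is in how you produce the faithful representation. You pass to an arbitrary chief factor $\mathbb{Z}_r^{\,k}$ of $A_\oplus$, so the prime $r$ is decoupled from the prime $p$ singled out by Lemma~\ref{solvablepcomp}. The paper instead shows that the action of $A_\odot$ on the \emph{same} quotient $A_\oplus/H\cong\mathbb{Z}_p^{\,n}$ is already faithful: if $A_\odot$ acted trivially there, a short computation with (\ref{multsum}) and (\ref{9onverse0}) shows that $a^{-1}\odot b\odot a\in H$ for all $a\in A$, $b\in H$, so $H$ would be a proper normal subgroup of $A_\odot$, contradicting simplicity. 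This is strictly stronger than the injectivity of $\lambda$ on all of $A_\oplus$ that you use, and it does not follow from it.

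Keeping the prime fixed is exactly what makes the endgame short. For $A_\odot={\rm PSL}_3(3)$ one has $p=13$, so $13^{n}\mid |A_\odot|$ forces $n=1$ and a faithful map ${\rm PSL}_3(3)\to{\rm GL}_1(13)$, which is absurd; likewise ${\rm PSL}_2(7)$ with $p=7$ dies immediately. Your route, by allowing $r=3$, lets ${\rm PSL}_3(3)$ survive into a computer search of solvable groups of order $5616$ that the paper never needs. The only genuinely delicate case---for the paper as for you---is ${\rm PSL}_2(7)$ with $p=2$ (where $n=3$ and ${\rm PSL}_2(7)\hookrightarrow{\rm GL}_3(2)$ really exists); the paper handles it by an explicit structural argument inside $A_\oplus\rtimes A_\odot$, looking at the Sylow $7$-subgroup of $H$ and its centraliser, rather than by machine. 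So your outline is sound, but the proposed ``verify by computer'' finish is unnecessary for ${\rm PSL}_3(3)$ and, as stated, not yet a proof for ${\rm PSL}_2(7)$.
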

\begin{proof} Since $A_{\oplus}$ is solvable, the index $|A_{\oplus}:[A_{\oplus},A_{\oplus}]_{\oplus}|$ is greater than $1$. Let $p$ be a  prime divisor of $|A_{\oplus}:[A_{\oplus},A_{\oplus}]_{\oplus}|$. From Lemma~\ref{characteristicsub} follows that there exists a characteristic subgroup $H$ of $A_{\oplus}$ such that $A_{\oplus}/H=\mathbb{Z}_p^n$ for some $n>0$. Since $H$ is characteristic subgroup of $A_{\oplus}$, for all $a\in A$ we have $\lambda_a(H)=H$, hence, the multiplicative group $A_{\odot}$ acts on $A_{\oplus}/H=\mathbb{Z}_p^n$. Therefore there is a homomorphism 
$$\varphi:A_{\odot}\to {\rm Aut}(A_{\oplus}/H)={\rm Aut}(\mathbb{Z}_p^n)={\rm GL}_n(p),$$ 
which maps an elememt $a\in A_{\odot}$ to the automorphism $\lambda_a$ of $A/H$. 

Suppose by contrary that $A_{\odot}$ is simple. In this case the kernel of $\varphi$ is either trivial, or coincides with $A_{\odot}$. Suppose that ${\rm ker}(\varphi)=A_{\odot}$. It means that for all $a,b\in A$ we have $\lambda_a(b\oplus H)=b\oplus H$, or 
$
\lambda_a(b)=b\oplus h
$
for some $h\in H$ ($h$ depends on $a,b$). Hence, from formula~(\ref{9onverse0}) follows that for every element $a\in A$ we have $a^{-1}=\ominus\lambda_{a}^{-1}(a)=\ominus(a\oplus h_1)$, where $h_1$ is an element from $H$ (which depends on $a$). Using these equalities, for arbitrary element $a\in A$, $b\in H$ we have
\begin{align*}
a^{-1}\odot b\odot a&=a^{-1}\oplus\lambda_a^{-1}(b\odot a)&&\\
&=\ominus(a\oplus h_1)\oplus (b\odot a)\oplus h_2&&h_1,h_2\in H\\
&=\ominus h_1\ominus a\oplus b\oplus \lambda_b(a)\oplus h_2&&h_1, h_2\in H\\
&=\ominus h_1\ominus a\oplus b\oplus a\oplus h_3\oplus h_2&&h_1,h_2,h_3\in H\\
&=\ominus h_1\oplus(\ominus a\oplus b\oplus a)\oplus h_3\oplus h_2&& h_1,h_2,h_3\in H,
\end{align*}
and since $\ominus a\oplus b\oplus a$ belongs to $H$, we conclude that $H$ is a normal subgroup of $A_{\odot}$. Since $|A_{\odot}:H|=p^n$ for $n>0$, the subgroup $H$ is a proper normal subgroup of $A_{\odot}$, what contradicts to the simplicity of $A_{\odot}$. Therefore, the case ${\rm ker}(\varphi)=A_{\odot}$ is impossible, and we have a faithful representation 
\begin{equation}\label{faithful}
\varphi:A_{\odot}\to {\rm GL}_n(p).
\end{equation}

Since $\lambda_a$ is an automorphism of $A_{\oplus}$, and $H$ is a characteristic subgroup of $A_{\oplus}$, for all $a,b\in H$ the element $\lambda_a(b)$ belongs to $H$, and the element $a\odot b=a\oplus\lambda_a(b)$ belongs to $H$. Therefore $H$ is a subgroup of $A_{\odot}$, and hence $H$ is a skew subbrace of $A$. Since $A$ is a minimal finite skew brace, such that $A_{\oplus}$ is solvable, and $A_{\odot}$ is not solvable, the multiplicative group $H_{\odot}$ is solvable, therefore by Hall theorem \cite[Theorem~20.1.1]{Osnovy} it contains a $p$-complement $P$ (which is solvable). Since $|A_{\odot}:H|=p^n$, and $P$ is a solvable $p$-complement in $H$, the group $P$ is a solvable $p$-complement in $A_{\odot}$. Since $A_{\odot}$ is simple, from Lemma~\ref{solvablepcomp} follows that one of the following cases hold
\begin{enumerate}
\item $A_{\odot}={\rm PSL}_2(q)$, where $q+1$ is a power of $p$;
\item $A_{\odot}={\rm PSL}_2(7)$, where $p=7$;
\item $A_{\odot}={\rm PSL}_3(3)$, where $p=13$.
\end{enumerate}
If we show that no of these cases possible, then we show that $A_{\odot}$ cannot be simple. We will prove that no of the three cases above possible considering the cases collected in the following table.
	\begin{center}
		\begin{tabular}{|l|l|l|}
			\hline
			1) $A_{\odot}={\rm PSL}_2(q)$, $q+1$ is a power of $p$~& 1.1) $q$ is odd&1.1.1) $q=3$ \\\cline{3-3}
			&In this case $p=2$ &1.1.2) $q=7$ \\\cline{3-3}
			& &1.1.3) $q>7$ \\
			\cline{2-3}
			& 1.2) $q$ is even&1.2.1) $q=2$ \\\cline{3-3}
			&In this case $p$ is odd &1.2.2) $q=4$ \\\cline{3-3}
			& &1.2.3) $q>4$ \\\cline{3-3}
			\hline
			\multicolumn{3}{|l|}{2) $A_{\odot}={\rm PSL}_2(7)$, $p=7$} \\
			\hline
			\multicolumn{3}{|l|}{3) $A_{\odot}={\rm PSL}_3(3)$, $p=13$} \\
			\hline
		\end{tabular}
	\end{center}

\noindent \textbf{Case 1:} $A_{\odot}={\rm PSL}_2(q)$, where $q+1$ is a power of $p$.

\noindent \textbf{Case 1.1:} $q$ is odd. In this case $q+1$ is even, and therefore $p=2$.

\noindent \textbf{Case 1.1.1:} $A_{\odot}={\rm PSL}_2(3)$. This case is impossible since the group $A_{\odot}={\rm PSL}_2(3)$ is solvable, but by the condition of the theorem the group $A_{\odot}$ is not solvable.

\noindent \textbf{Case 1.1.2:} $A_{\odot}={\rm PSL}_2(7)$. Since $A_{\oplus}/H=\mathbb{Z}_2^n$ and $|A_{\oplus}|=|A_{\odot}|=|{\rm PSL}_2(7)|=2^3\cdot 3\cdot 7$, the number $n$ is either $1$, or $2$, or $3$. From formula (\ref{faithful}) follows that there exists a faithful representation
$$A_{\odot}\to {\rm GL}_n(2).$$ 
Since $|{\rm GL}_1(2)|=1$, $|{\rm GL}_2(2)|=6$ and $|A_{\odot}|=|{\rm PSL}_2(7)|=168$, the cases $n=1,2$ are impossible, hence, $n=3$, and $|H|=3\cdot 7=21$.

Let $X$ be a Sylow $7$-subgroup of $H$. This group is a cyclic group of order $7$. By Sylow theorem, $X$ is a unique Sylow $7$-subgroup of $H$, therefore $X$ is a characteristic subgroup of $H$. Since $H$ is a characteristic subgroup of $A_{\oplus}$, and $X$ is a characteristic subgroup of $H$, the group $X$ is a characteristic subgroup of $A_{\oplus}$. Denote by $G=A_{\oplus}\rtimes A_{\odot}=A_{\oplus}\rtimes {\rm PSL}_2(7)$, where $A_{\odot}={\rm PSL}_2(7)$ acts on $A_{\oplus}$ by automorphisms $\lambda_a$ for $a\in A_{\odot}$.

Since $X$ is characteristic in $A_{\oplus}$, and  $A_{\odot}={\rm PSL}_2(7)$ acts on $A_{\oplus}$, the group $A_{\odot}={\rm PSL}_2(7)$ acts on $X$, therefore $X$ is a normal subgroup of $G$. Let $C=C_G(X)$ be the centralizer of $X$ in $G$. Since $X$ is a normal subgroup of $G$, the group $C$ is also a normal subgroup of $G$. Since $X$ is cyclic, the group ${\rm Aut}(X)$ is abelian, and since $A_{\odot}={\rm PSL}_2(7)$ is simple, the group $A_{\odot}={\rm PSL}_2(7)$ acts trivially on $X$, so, $A_{\odot}={\rm PSL}_2(7)$ is a subgroup of $C$.

Since $C$ is a normal subgroup of $G$, the group $CH/H$ is a normal subgroup of 
$$G/H=(A_{\oplus}/H)\rtimes A_{\odot}=(\mathbb{Z}_2)^3\rtimes {\rm PSL}_2(7).$$
Moreover, since $A_{\odot}={\rm PSL}_2(7)$ is a subgroup of $C$, we have the following inclusions
$${\rm PSL}_2(7)\leq CH/H\leq(\mathbb{Z}_2)^3\rtimes {\rm PSL}_2(7).$$
From this inclusions follows that $CH/H=B\rtimes {\rm PSL}_2(7)$, where $B$ is a subgroup of $\mathbb{Z}_2^3$. If $B$ is a proper subgroup of $\mathbb{Z}_2^3$, then the semidirect product 
$$(A_{\oplus}/H)\rtimes A_{\odot}=(\mathbb{Z}_2)^3\rtimes {\rm PSL}_2(7)$$
is a direct product (since ${\rm PSL}_2(7)$ cannot act nontrivially on $B<\mathbb{Z}_2^3$). Therefore, the homomorphism 
$$\varphi:A_{\odot}\to {\rm Aut}(A_{\oplus}/H)={\rm GL}_3(2),$$ 
which maps an elememt $a\in A_{\odot}$ to the automorphism $\lambda_a$ of $A/H$ is trivial, what contradicts formula~(\ref{faithful}). Hence $B$ cannot be a proper subgroup of $B$, i.~e. either $CH/H={\rm PSL}_2(7)$ or $CH/H=(\mathbb{Z}_2)^3\rtimes {\rm PSL}_2(7)$. Let us prove that no of these cases is possible.

If $CH/H={\rm PSL}_2(7)$, then due to the fact that $CH/H$ is normal in $G/H=(\mathbb{Z}_2)^3\rtimes {\rm PSL}_2(7)$, the semidirect product $(A_{\oplus}/H)\rtimes A_{\odot}=(\mathbb{Z}_2)^3\rtimes {\rm PSL}_2(7)$
is a direct product. Therefore, the homomorphism 
$\varphi:A_{\odot}\to {\rm Aut}(A_{\oplus}/H)={\rm GL}_3(2)$, 
which maps an elememt $a\in A_{\odot}$ to the automorphism $\lambda_a$ of $A/H$ is trivial, what contradicts formula~(\ref{faithful}). Hence, the case  $CH/H={\rm PSL}_2(7)$ is impossible, and we have $CH/H=G/H$, or equivalently $CH=G$. Here we have two cases: either $C\cap A_{\oplus}=A_{\oplus}$ or $C\cap A_{\oplus}\neq A_{\oplus}$. 

If $C\cap A_{\oplus}=A_{\oplus}$, then $A_{\oplus}$ normalizes $X$, therefore $A_{\oplus}=X\times D$, where $D$ is a Hall $\{2,3\}$-subgroup of $A_{\oplus}$. Since $A_{\oplus}=X\times D$, where $D$ is a Hall $\{2,3\}$-subgroup of $A_{\oplus}$, the group  $D$ is characteristic in $A_{\oplus}$. Therefore the group ${\rm PSL}_2(7)$ acts on $A_{\oplus}/D$. Since the group $A_{\oplus}/D$ has order $|A_{\oplus}/D|=|A_{\oplus}|/|D|=|X|=7$, it is a cyclic group, and $|{\rm Aut}(A_{\oplus}/D)|$ is abelian. Since $A_{\odot}={\rm PSL}_2(7)$ is simple, the group $A_{\odot}={\rm PSL}_2(7)$ acts trivially on $A_{\oplus}/D$.  It means that for all $a,b\in A$ we have $\lambda_a(b\oplus D)=b\oplus D$, or 
$
\lambda_a(b)=b\oplus d
$
for some $d\in D$ ($d$ depends on $a,b$). 
Hence, from formula~(\ref{9onverse0}) follows that for every element $a\in A$ we have $a^{-1}=\ominus\lambda_{a}^{-1}(a)=\ominus(a\oplus d_1)$, where $d_1$ is an element from $D$ (which depends on $a$). Using these equalities, for arbitrary element $a\in A$, $b\in D$ we have
\begin{align*}
a^{-1}\odot b\odot a&=a^{-1}\oplus\lambda_a^{-1}(b\odot a)&&\\
&=\ominus(a\oplus d_1)\oplus (b\odot a)\oplus d_2&&d_1,d_2\in D\\
&=\ominus d_1\ominus a\oplus b\oplus \lambda_b(a)\oplus d_2&&d_1, d_2\in D\\
&=\ominus d_1\ominus a\oplus b\oplus a\oplus d_3\oplus d_2&&d_1,d_2,d_3\in D\\
&=\ominus d_1\oplus(\ominus a\oplus b\oplus a)\oplus d_3\oplus d_2&& d_1,d_2,d_3\in D,
\end{align*}
and since $\ominus a\oplus b\oplus a$ belongs to $D$, we conclude that $D$ is a normal subgroup of $A_{\odot}$. Since $|A_{\odot}:D|=7$, the subgroup $D$ is a proper normal subgroup of $A_{\odot}$, what contradicts to the simplicity of $A_{\odot}$. So, the case $C\cap A_{\oplus}=A_{\oplus}$ is impossible.

If $C\cap A_{\oplus}\neq A_{\oplus}$, then since $CH=G$, the group $C$ contains a Sylow $2$-subgroup of $A_{\oplus}$. Since $C\cap A_{\oplus}\neq A_{\oplus}$, and $C\cap A_{\oplus}$ contains both Sylow $7$-subgroup $X$ of $A_{\oplus}$ and Sylow $2$-subgroup of $A_{\oplus}$, the group $C\cap A_{\oplus}$ is a Hall $\{2,7\}$-subgroup of $A_{\oplus}$.  Since $C,A_{\oplus}$ are normal subgroups of $G$, the group $C\cap A_{\oplus}$ is a normal subgroup of $G$. Therefore, the group $A_{\odot}$ acts on $C\cap A_{\oplus}$, and hence $C\cap A_{\oplus}$ is a skew subbrace of $A$, in particular, $C\cap A_{\oplus}$ is a Hall $\{2,7\}$-subgroup of $A_{\odot}$. So, the group $A_{\odot}$ has a  $2$-complement $H$, and a $3$-complement $C\cap A_{\oplus}$, therefore by \cite[Corollary~4.4]{AraWar} the group $A_{\odot}$ is solvable, what contradicts the choice of $A$.

\noindent \textbf{Case 1.1.3:} $A_{\odot}={\rm PSL}_2(q)$, where $q>7$. The order of the group $A_{\odot}={\rm PSL}_2(q)$ is equal to
$$|{\rm PSL}_2(q)|=\frac{(q-1)q(q+1)}{2}.$$
Therefore the maximal power of $2$ which divides $|{\rm PSL}_2(q)|$ is equal to $(q+1)$ ($(q+1)$ is a power of~$2$, $q$ is odd, and $q-1$ is divisible by 2, but not divisible by $4$). From formula (\ref{faithful}) follows that there exists a faithful representation
$${\rm PSL}_2(q)\to {\rm GL}_n(2),$$ 
where $n\leq {\rm log}_2(q+1)$. From the other side from Lemma~\ref{letsrepresent} follows that if there exists a faithful representation ${\rm PSL}_2(q)\to {\rm GL}_n(2)$, then $n\geq (q-1)/2$. From these two inequalities follows that ${\rm log}_2(q+1)\geq (q-1)/2$ or $q+1\geq 2^{(q-1)/2}$. This inequality is impossible for $q>7$, therefore the case $A_{\odot}={\rm PSL}_2(q)$, where $q+1$ is a power of $p$, and $q>7$ is impossible.

\noindent\textbf{Case 1.2:} $q$ is even.

\noindent \textbf{Case 1.2.1:} $A_{\odot}={\rm PSL}_2(2)$. This case is impossible since the group $A_{\odot}={\rm PSL}_2(2)$ is solvable, but by the condition of the theorem the group $A_{\odot}$ is not solvable.

\noindent \textbf{Case 1.2.2:} $A_{\odot}={\rm PSL}_2(4)$. In this case $p=5$. Since the order of $A_{\odot}={\rm PSL}_2(4)$ is equal to $|{\rm PSL}_2(4)|=2^2\cdot 3\cdot 5$, and $|A_{\oplus}:H|=p^n=5^n$, we conclude that $n=1$. From formula~(\ref{faithful}) follows that there exists a faithful representation 
$$
\varphi:{\rm PSL}_2(4)\to {\rm GL}_1(5).
$$
But such representation cannot exist due to the equalities $|{\rm PSL}_2(4)|=2^2\cdot 3\cdot 5$, ${\rm GL}_1(5)=4$. Hence, the case $A_{\odot}={\rm PSL}_2(4)$, $p=5$ is impossible.

\noindent \textbf{Case 1.2.3:} $A_{\odot}={\rm PSL}_2(q)$ for $q>4$ is impossible. Since $q$ is even, $p$ is odd, hence $p\geq 3$. The order of the group $A_{\odot}={\rm PSL}_2(q)$ is equal to
$$|{\rm PSL}_2(q)|=(q-1)q(q+1).$$
Therefore the maximal power of $p$ which divides $|{\rm PSL}_2(q)|$ is equal to $(q+1)$ ($(q+1)$ is a power of $p$, and $q(q-1)$ is relatively prime with $p$). From formula (\ref{faithful}) follows that there exists a faithful representation
$${\rm PSL}_2(q)\to {\rm GL}_n(p),$$ 
where $n\leq {\rm log}_p(q+1)$. From the other side from Lemma~\ref{letsrepresent} follows that if there exists a faithful representation ${\rm PSL}_2(q)\to {\rm GL}_n(p)$, then $n\geq (q-1)$. From these two equalities follows that 
$${\rm log}_p(q+1)\geq (q-1)$$ 
or $(q+1)\geq p^{q-1}\geq 3^{q-1}$. This inequality is impossible for $q>4$, therefore the case $A_{\odot}={\rm PSL}_2(q)$, where $q+1$ is a power of $p$, and $q>4$ is impossible.

\noindent \textbf{Case 2:} $A_{\odot}={\rm PSL}_3(3)$, $p=13$. Since the order of $A_{\odot}={\rm PSL}_3(3)$ is equal to $|{\rm PSL}_3(3)|=2^4\cdot 3\cdot 13$, and $|A_{\oplus}:H|=p^n=13^n$, we conclude that $n=1$. From formula~(\ref{faithful}) follows that there exists a faithful representation 
$$
\varphi:{\rm PSL}_3(3)\to {\rm GL}_1(13).
$$
But such representation cannot exist due to the equalities $|{\rm PSL}_3(3)|=2^4\cdot 3\cdot 13$, ${\rm GL}_1(13)=12$. Hence, the case $A_{\odot}={\rm PSL}_3(3)$, $p=13$ is impossible.

\noindent \textbf{Case 3:} $A_{\odot}={\rm PSL}_2(7)$, $p=7$. Since the order of $A_{\odot}={\rm PSL}_2(7)$ is equal to $|{\rm PSL}_2(7)|=2^3\cdot 3\cdot 7$, and $|A_{\oplus}:H|=p^n=7^n$, we conclude that $n=1$. From formula~(\ref{faithful}) follows that there exists a faithful representation 
$$
\varphi:{\rm PSL}_2(7)\to {\rm GL}_1(7).
$$
But such representation cannot exist due to the equalities $|{\rm PSL}_2(7)|=2^3\cdot 3\cdot 7$, ${\rm GL}_1(7)=6$. Hence, the case $A_{\odot}={\rm PSL}_2(7)$, $p=7$ is impossible.
\end{proof}

{\small

\medskip

\medskip

\noindent
Ilya Gorshkov\\
Sobolev Institute of Mathematics, Acad. Koptyug avenue 4, 630090 Novosibirsk, Russia\\
ilygor8@gmail.com

~\\
Timur Nasybullov\\
Sobolev Institute of Mathematics, Acad. Koptyug avenue 4, 630090 Novosibirsk, Russia\\
Novosibirsk State University, Pirogova 1, 630090 Novosibirsk, Russia\\
ntr@math.nsc.ru

}
\end{document}